\newcommand{\ds}{\displaystyle}
\newcommand{\barF}{\overline{F}}
\newcommand{\barE}{\overline{E}}
\newcommand{\barP}{\overline{P}}
\newcommand{\spazio}{\vspace{1mm}}  
\newcommand{\qed}{\hfill\rule{2mm}{2mm}}      
\newtheorem{proposition}{Proposition}[section]
\newenvironment{proof}{\begin{trivlist}
\item[\hspace{\labelsep}{\bf\noindent Proof. }]}
{\qed\end{trivlist}}
\date{\empty}
\title{\large
{\bf COMPETING RISKS WITHIN SHOCK MODELS}\footnote{Paper appeared in 
{\em Scientiae Mathematicae Japonicae} 67 (2008), No. 2, 125--135.  
\newline
{\em 2000 Mathematics Subject Classification:} 62N05, 60K10.  
\newline
{\em Key words and phrases.} Poisson shock models, competing risks model, failure distributions.  
}}
\author{\sc Antonio Di Crescenzo and Maria Longobardi}
\begin{document}
\maketitle
\begin{abstract}
We consider a competing risks model, in which system failures are due to one 
out of two mutually exclusive causes, formulated within the framework of shock 
models driven by bivariate Poisson process. We obtain the failure densities 
and the survival functions as well as other related quantities under three 
different schemes. Namely, system failures are assumed to occur at the first 
instant in which a random constant threshold is reached by (a) the sum of 
received shocks, (b) the minimum of shocks, (c) the maximum of shocks. 
\end{abstract}
%
\section{Introduction}
The classical competing risks model deals with failure times subject to multiple 
causes of failure. This model is of specific interest in various applied fields such 
as survival analysis and reliability theory. Indeed, it is appropriate for instance 
for describing failures of organisms or devices in the presence of different 
types of risks. Usually this model involves an observable pair $(T,\delta)$, 
where $T$ is the time of failure and $\delta$ describes cause or type of 
failure. General properties of this model can be found in the literature 
(see Bedford and Cooke \cite{BeCo01} and Crowder \cite{Cr01}). For a list of 
references and recent results on competing risks model see Di Crescenzo 
and Longobardi \cite{DiLo2006}, and other contributions in \cite{JSPI2006}. 
\par
In this paper we introduce a formulation of the competing risks model within 
the framework of stochastic shock models. These have been introduced and 
studied with the aim of describing systems subject to shocks occurring 
randomly such as events in counting processes. About 40 years ago 
A.W.\ Marshall and I.\ Olkin began to investigate probabilistic shock models. 
In particular, in \cite{MaOl66} and \cite{MaOl67} they obtained certain 
bivariate exponential distributions from fatal and non-fatal shock models. 
Later, by means of the total positivity theory, Esary {\em et al.}\ \cite{EsMaPr73} 
introduced a classical preservation problem in shock models, i.e.\ to show that 
certain properties of the discrete distribution of the number of shocks that leads 
a system to failure are reflected in corresponding properties of the continuous 
distribution of the system lifetime. This investigation line has been followed  
by numerous authors including, for instance, El-Neweihi {\em et al.}\ 
\cite{ENPrSe83}. We also recall the contributions by Gottlieb \cite{Go80} 
and by Ghosh and Ebrahimi \cite{GoEb82}, concerning 
the increasing failure rate property of the life distribution in shock models. 
Furthermore, preservations results for partial stochastic orderings and for 
classes of failure distributions in various types of shock models have been 
obtained by Klefsj\"o \cite{Kl81}, Fagiuoli and Pellerey \cite{FaPe93}, 
\cite{FaPe94a}, \cite{FaPe94b}, Kebir \cite{Ke94}, Kochar \cite{Ko90}, 
Pellerey \cite{Pe93}, Singh and Jain \cite{SiJa89}. Moreover, cumulative 
shock models in univariate and multivariate cases have been considered 
by Gut \cite{Gu90}, Kijima and Nakagawa \cite{KiNa91}, P\'erez-Oc\'on and 
G\'amiz-P\'erez \cite{PeGa95}, \cite{PeGa96}, Shaked and Shanthikumar \cite{ShSh91}. 
\par
Recently, new general classes of shock models of interest in reliability theory 
and survival analysis have been proposed, in which failure is due to the 
competing causes of degradation and trauma (see Lehmann \cite{Le06}, where 
the survival function of the failure time has been computed). This research 
direction is motivated by the circumstance that failure mechanisms can 
often be ascribed to an underlying degradation process and stochastically 
changing covariates. The present paper falls within such a line of investigation, 
aiming to include the presence of competing risks into the classical shock 
models scheme. 
\par
In our setting we consider systems characterized by two types of shocks and assume 
that failures are due to a single cause, with shocks arriving according to a pair 
of independent counting processes $N_1(t)$ and $N_2(t)$. We study in detail the 
quantities of interest under three different failure schemes. In each scheme, 
failures are assumed to occur at the first instant in which one of the 
following relations holds: 
\par
(a) \ $N_1(t) + N_2(t) = M$, 
\par
(b) \ $\min\{N_1(t), N_2(t)\} = M$, 
\par
(c) \ $\max\{N_1(t), N_2(t)\} = M$, \\
where $M$ is a random counting number. We examine various cases arising 
from suitable choices of the distribution of $M$. 
\par
In Section 2 we describe the general setting of the model by introducing the 
relevant hazard rates and obtain failure densities, survival function, 
failure probability and the failure time moments conditional on cause or 
type of failure $\delta$. In Section 3 we study this model for each of 
the above schemes under the assumption that $N_1(t)$ and $N_2(t)$ are 
homogeneous Poisson processes. When failures are due to sum of shocks, 
failure time $T$ and type of failure $\delta$ are shown to be independent, 
and an approximation of the survival function in the presence of an 
arbitrary (large) number $h$ of types of shocks is given. Finally, in 
Section 4 we suggest how to extend this model to the case of two 
non-exclusive types of shocks. 
\section{A model with two kinds of shocks}
Recently, various investigations have been oriented towards multidimensional shock 
models. See, for instance, Ohi and Nishida \cite{OhNi78}, \cite{OhNi79}, and 
Balu and Sabnis \cite{BaSa97} for various results on bivariate shock models, 
including properties of the joint survival probability and preservation of 
reliability structures, and Wong \cite{Wo97} and Pellerey \cite{Pe99} for 
preservation of stochastic orders in multivariate shock models with underlying 
counting processes. A multidimensional shock model driven by counting process
is considered in Li and Xu \cite{LiXu01}, where each shock may simultaneously 
destroy a subset of the components of a system consisting of several components. 
We shall now introduce a new kind of shock model characterized by two types 
of shocks, the extension to a higher number of types of shocks being 
straightforward. This has been inspired by the shock model with aftereffects 
treated in Marshall and Olkin \cite{MaOl67} and in Ghurye and Marshall \cite{GuMa80}. 
\par
Let $T$ be an absolutely continuous non-negative random variable describing the 
random failure time of a system or of a living organism. We set $\delta=i$ if 
the failure occurs due to a shock of type $i$, for $i=1,2$. Let $N(t)$ denote 
the total number of shocks occurring in $[0,t],$ with $t\geq 0$. We have 
$$
 N(t) = N_1(t) + N_2(t), \qquad t\geq 0,
$$
where $N_i(t)$ is the counting process representing the number of shocks 
of type $i$ occurring in $[0,t]$, $i=1,2$. 
\par 
The state-space $\mathbb{N}^2_0$ of $(N_1(t),N_2(t))$ is partitioned 
into non-empty subsets $S_k$, $k=0,1,2,\dots$, that will be called {\em failure 
sets}, $S_0$ including the numbers of non-fatal shocks. In other terms, if 
$(N_1(t),N_2(t))\in S_0$ then the shocks that arrived up to time $t$ do not cause 
a failure. Let $M$ be the integer-valued random variable that represents the 
index of which failure set $S_k$, $k=1,2,\dots$, containes the numbers of fatal 
shocks. This means that if $M=k$ then the system fails at the first instant $t>0$ 
in which $(N_1(t),N_2(t))\in S_k$. The probability distribution and the survival 
probability of $M$ will be respectively denoted by 
\begin{equation}
 p_k=P(M=k), \qquad k=1,2,\dots,
 \label{equation:4}
\end{equation}
and
\begin{equation}
 \barP_k=P(M>k), \qquad k=0,1,2,\dots.
 \label{equation:5}
\end{equation}
For $k=1,2,\dots$ we now define 
\begin{equation}
\begin{array}{l}
 \tilde S_k^{(1)}= \{(x_1,x_2)\in \mathbb{N}^2_0-S_k:
 (x_1+1,x_2)\in S_k\}, \\
 \\
 \tilde S_k^{(2)}= \{(x_1,x_2)\in \mathbb{N}^2_0-S_k:
 (x_1,x_2+1)\in S_k\}.
\end{array}
\label{equation:21}
\end{equation}
These will be called {\em risky sets}, because $\tilde S_k^{(i)}$, $i=1,2$, 
containes all states of $\mathbb{N}^2_0$ that lead to a failure at time $t$ 
at the next occurrence of a shock of type $i$, given that $M=k$. Note that 
definitions (\ref{equation:21}) assume that failures are due to a single cause. 
Moreover, note that in general $\tilde S_k^{(i)}$ is different from $S_{k-1}$. 
\par 
Denoting by $f_T(t)$, $t\geq 0$, the probability density function of the failure 
time $T$, we have 
\begin{equation}
 f_T(t)=f_1(t)+f_2(t), \qquad t\geq 0,
 \label{equation:13}
\end{equation}
where $f_i(t)$ is the sub-density defined by 
$$ 
 f_i(t)=\frac{\rm d}{{\rm d}t}P\{T\leq t,\,\delta=i\}, 
 \qquad t\geq 0, \quad i=1,2.
$$
Recalling that $\delta=i$ if the failure occurs due to a shock of type $i$, 
we have 
\begin{equation}
 P(\delta=i)=\int_0^{\infty} f_i(t)\,{\rm d}t, 
 \qquad i=1,2.
 \label{equation:12}
\end{equation}
\par
In order to express the sub-densities $f_i(t)$, $i=1,2$, in terms of the joint 
probability distribution of $(N_1(t),N_2(t))$ we now introduce the hazard rates 
\begin{equation}
 \begin{array}{l}
 r_1(x_1,x_2; t)=\ds\lim_{\tau\to 0^+} \frac{1}{\tau}
 P\{N_1(t+\tau)= x_1+1, N_2(t+\tau)= x_2| N_1(t)=x_1, N_2(t)=x_2\}, \\
 \\   
 r_2(x_1,x_2; t)=\ds\lim_{\tau\to 0^+} \frac{1}{\tau}
 P\{N_1(t+\tau)= x_1, N_2(t+\tau)= x_2+1| N_1(t)=x_1, N_2(t)=x_2\},
 \end{array}
 \label{equation:7}
\end{equation}
with $(x_1,x_2)\in\mathbb{N}^2_0$ and $t\geq 0$. Given that $x_1$ shocks of type 
1 and $x_2$ shocks of type 2 occurred in $[0,t]$, $r_i(x_1,x_2; t)$ gives the 
intensity of the occurrence of a shock of type $i$ immediately after $t$, with $i=1,2$. 
We note that $r_1(x_1,x_2; t)+r_2(x_1,x_2; t)$ is the hazard rate of a shock of 
type 1 or 2.  
\par
From the above assumptions it follows that the system fails around time $t$ due to 
a shock of type $i$ if the two-dimensional counting process $(N_1(t),N_2(t))$ 
takes values in $\tilde S_k^{(i)}$ at time $t$ {\em and} a shock of type $i$ occurs 
immediately after. Hence, conditioning on $M$ and recalling (\ref{equation:4}), 
(\ref{equation:21}) and (\ref{equation:7}), for $t\geq 0$ and $i=1,2$, 
failure densities can be expressed as 
\begin{equation}
 f_i(t)=\sum_{k=1}^{+\infty} p_k \sum_{(x_1,x_2)\in \tilde S_k^{(i)}}
 P\{N_1(t)=x_1,N_2(t)=x_2\}\,r_i(x_1,x_2;t).
 \label{equation:6}
\end{equation}
A relation similar to (\ref{equation:6}) holds for the survival function 
of $T$, denoted by  
$$
 \barF_T(t)=P\{T>t\}, \qquad t\geq 0. 
$$
Indeed, conditioning on $(N_1(t),N_2(t))$ and recalling (\ref{equation:5}), we obtain
\begin{equation}
 \barF_T(t)=\sum_{k=0}^{+\infty} \barP_k \sum_{(x_1,x_2)\in S_k} 
 P\{N_1(t)=x_1,N_2(t)=x_2\},
 \qquad t\geq 0,
 \label{equation:9}
\end{equation}
where $\barP_0=1$. Other quantities of interest are the moments of the failure 
time conditional on the cause of failure:
\begin{equation}
 E(T^s\,|\,\delta=i)={1\over P(\delta=i)}\int_{0}^{+\infty} t^s\,f_i(t)\,{\rm d}t,
 \qquad i=1,2. 
 \label{equation:23}
\end{equation}
This can be evaluated making use of (\ref{equation:12}) and (\ref{equation:6}). 
\section{Shocks driven by a Poisson process}
In this Section we assume that the two kinds of shocks affect the system 
according to independent homogeneous Poisson processes $N_1(t)$ and $N_2(t)$. 
Therefore, for $x_1,x_2=0,1,2,\dots$ and $t\geq 0$ one has: 
\begin{equation}
 P\{N_1(t)=x_1,N_2(t)=x_2\}=\frac{e^{-\lambda_1 t}(\lambda_1 t)^{x_1}}{x_1!}
 \frac{e^{-\lambda_2 t}(\lambda_2 t)^{x_2}}{x_2!},
 \label{equation:8}
\end{equation}
with $\lambda_1>0$ and $\lambda_2>0$. Under assumption (\ref{equation:8}), from 
(\ref{equation:7}) we now have  
$$
 r_i(x_1,x_2; t)=\lambda_i, 
 \qquad (x_1,x_2)\in \mathbb{N}^2_0, 
 \quad t\geq 0, \quad i=1,2.
$$
Hence, from (\ref{equation:6}) and (\ref{equation:9}) we obtain:
\begin{equation}
 f_i(t)=\sum_{k=1}^{+\infty} p_k \sum_{(x_1,x_2)\in \tilde S_k^{(i)}}
 \frac{e^{-\lambda_1 t}(\lambda_1 t)^{x_1}}{x_1!}
 \frac{e^{-\lambda_2 t}(\lambda_2 t)^{x_2}}{x_2!}\lambda_i, 
 \qquad t\geq 0, \quad i=1,2
 \label{equation:10}
\end{equation}
and
\begin{equation}
 \barF_T(t)=\sum_{k=0}^{+\infty} \barP_k \sum_{(x_1,x_2)\in S_k}
 \frac{e^{-\lambda_1 t}(\lambda_1 t)^{x_1}}{x_1!}
 \frac{e^{-\lambda_2 t}(\lambda_2 t)^{x_2}}{x_2!}
 \qquad t\geq 0. 
 \label{equation:11}
\end{equation}
Recalling (\ref{equation:12}) and (\ref{equation:10}), it is not difficult to see that 
\begin{equation}
 P(\delta=i) =\pi_i \sum_{k=1}^{+\infty} p_k \sum_{(x_1,x_2)\in \tilde S_k^{(i)}}
 {(x_1+x_2)!\over x_1!\,x_2!}\,\pi_1^{x_1} \,\pi_2^{x_2}, 
 \qquad i=1,2,
 \label{equation:24}
\end{equation}
where we have set $\pi_i=\lambda_i/(\lambda_1+\lambda_2)$, $i=1,2$. 
Similarly, from (\ref{equation:23}) we obtain the conditional moments  
\begin{equation}
 E(T^s\,|\,\delta=i)={1\over (\lambda_1+\lambda_2)^s}\,
 \ds\frac{\ds\sum_{k=1}^{+\infty} p_k \sum_{(x_1,x_2)\in \tilde S_k^{(i)}}
 \ds{(x_1+x_2+s)!\over x_1!\,x_2!}\,\pi_1^{x_1} \,\pi_2^{x_2}}
 {\ds\sum_{k=1}^{+\infty} p_k \sum_{(x_1,x_2)\in \tilde S_k^{(i)}}
 \ds{(x_1+x_2)!\over x_1!\,x_2!}\,\pi_1^{x_1} \,\pi_2^{x_2}}, 
 \qquad i=1,2. 
 \label{equation:25}
\end{equation}
The above expressions show that the structures of the failure sets and 
of the risky sets are essential to specify the nature of the shock model. 
Hereafter we consider three special cases of interest, in which these sets 
are chosen according to typical models of reliability theory, as sketched 
in Figure \ref{figure:1} for $S_k$. 
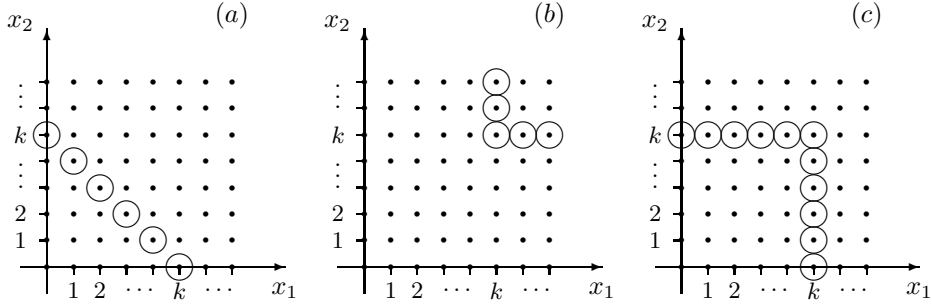
\begin{figure}
\begin{center}
\begin{picture}(341,146) 
\put(0,35){\vector(1,0){100}} 
\put(10,25){\vector(0,1){100}} 
\put(120,35){\vector(1,0){100}} 
\put(130,25){\vector(0,1){100}} 
\put(240,35){\vector(1,0){100}} 
\put(250,25){\vector(0,1){100}} 
\put(20,32){\line(0,1){3}} 
\put(30,32){\line(0,1){3}} 
\put(40,32){\line(0,1){3}} 
\put(50,32){\line(0,1){3}} 
\put(60,32){\line(0,1){3}} 
\put(70,32){\line(0,1){3}} 
\put(80,32){\line(0,1){3}} 
\put(140,32){\line(0,1){3}} 
\put(150,32){\line(0,1){3}} 
\put(160,32){\line(0,1){3}} 
\put(170,32){\line(0,1){3}} 
\put(180,32){\line(0,1){3}} 
\put(190,32){\line(0,1){3}} 
\put(200,32){\line(0,1){3}} 
\put(260,32){\line(0,1){3}} 
\put(270,32){\line(0,1){3}} 
\put(280,32){\line(0,1){3}} 
\put(290,32){\line(0,1){3}} 
\put(300,32){\line(0,1){3}} 
\put(310,32){\line(0,1){3}} 
\put(320,32){\line(0,1){3}} 
\put(7,45){\line(1,0){3}} 
\put(7,55){\line(1,0){3}} 
\put(7,65){\line(1,0){3}} 
\put(7,75){\line(1,0){3}} 
\put(7,85){\line(1,0){3}} 
\put(7,95){\line(1,0){3}} 
\put(7,105){\line(1,0){3}} 
\put(127,45){\line(1,0){3}} 
\put(127,55){\line(1,0){3}} 
\put(127,65){\line(1,0){3}} 
\put(127,75){\line(1,0){3}} 
\put(127,85){\line(1,0){3}} 
\put(127,95){\line(1,0){3}} 
\put(127,105){\line(1,0){3}} 
\put(247,45){\line(1,0){3}} 
\put(247,55){\line(1,0){3}} 
\put(247,65){\line(1,0){3}} 
\put(247,75){\line(1,0){3}} 
\put(247,85){\line(1,0){3}} 
\put(247,95){\line(1,0){3}} 
\put(247,105){\line(1,0){3}} 
\put(10,14){\makebox(20,15)[t]{\small 1}} 
\put(20,14){\makebox(20,15)[t]{\small 2}} 
\put(35,12){\makebox(20,15)[t]{\small $\ldots$}} 
\put(45,14){\makebox(30,15)[t]{\small $k$}} 
\put(65,12){\makebox(20,15)[t]{\small $\ldots$}} 
\put(130,14){\makebox(20,15)[t]{\small 1}} 
\put(140,14){\makebox(20,15)[t]{\small 2}} 
\put(155,12){\makebox(20,15)[t]{\small $\ldots$}} 
\put(165,14){\makebox(30,15)[t]{\small $k$}} 
\put(185,12){\makebox(20,15)[t]{\small $\ldots$}} 
\put(250,14){\makebox(20,15)[t]{\small 1}} 
\put(260,14){\makebox(20,15)[t]{\small 2}} 
\put(275,12){\makebox(20,15)[t]{\small $\ldots$}} 
\put(285,14){\makebox(30,15)[t]{\small $k$}} 
\put(305,12){\makebox(20,15)[t]{\small $\ldots$}} 
\put(-10,33){\makebox(20,15)[t]{\small 1}} 
\put(-10,43){\makebox(20,15)[t]{\small 2}} 
\put(-10,65){\makebox(20,15)[t]{\small $\vdots$}} 
\put(-15,73){\makebox(30,15)[t]{\small $k$}} 
\put(-10,95){\makebox(20,15)[t]{\small $\vdots$}} 
\put(110,33){\makebox(20,15)[t]{\small 1}} 
\put(110,43){\makebox(20,15)[t]{\small 2}} 
\put(110,65){\makebox(20,15)[t]{\small $\vdots$}} 
\put(105,73){\makebox(30,15)[t]{\small $k$}} 
\put(110,95){\makebox(20,15)[t]{\small $\vdots$}} 
\put(230,33){\makebox(20,15)[t]{\small 1}} 
\put(230,43){\makebox(20,15)[t]{\small 2}} 
\put(230,65){\makebox(20,15)[t]{\small $\vdots$}} 
\put(225,73){\makebox(30,15)[t]{\small $k$}} 
\put(230,95){\makebox(20,15)[t]{\small $\vdots$}} 
\put(75,14){\makebox(50,15)[t]{$x_1$}} 
\put(195,14){\makebox(50,15)[t]{$x_1$}} 
\put(315,14){\makebox(50,15)[t]{$x_1$}} 
\put(-25,115){\makebox(50,15)[t]{$x_2$}} 
\put(95,115){\makebox(50,15)[t]{$x_2$}} 
\put(215,115){\makebox(50,15)[t]{$x_2$}} 
\put(10,35){\circle*{2}} 
\put(10,45){\circle*{2}} 
\put(10,55){\circle*{2}} 
\put(10,65){\circle*{2}} 
\put(10,75){\circle*{2}} 
\put(10,85){\circle*{2}} 
\put(10,95){\circle*{2}} 
\put(10,105){\circle*{2}} 
\put(20,35){\circle*{2}} 
\put(20,45){\circle*{2}} 
\put(20,55){\circle*{2}} 
\put(20,65){\circle*{2}} 
\put(20,75){\circle*{2}} 
\put(20,85){\circle*{2}} 
\put(20,95){\circle*{2}} 
\put(20,105){\circle*{2}} 
\put(30,35){\circle*{2}} 
\put(30,45){\circle*{2}} 
\put(30,55){\circle*{2}} 
\put(30,65){\circle*{2}} 
\put(30,75){\circle*{2}} 
\put(30,85){\circle*{2}} 
\put(30,95){\circle*{2}} 
\put(30,105){\circle*{2}} 
\put(40,35){\circle*{2}} 
\put(40,45){\circle*{2}} 
\put(40,55){\circle*{2}} 
\put(40,65){\circle*{2}} 
\put(40,75){\circle*{2}} 
\put(40,85){\circle*{2}} 
\put(40,95){\circle*{2}} 
\put(40,105){\circle*{2}} 
\put(50,35){\circle*{2}} 
\put(50,45){\circle*{2}} 
\put(50,55){\circle*{2}} 
\put(50,65){\circle*{2}} 
\put(50,75){\circle*{2}} 
\put(50,85){\circle*{2}} 
\put(50,95){\circle*{2}} 
\put(50,105){\circle*{2}} 
\put(60,35){\circle*{2}} 
\put(60,45){\circle*{2}} 
\put(60,55){\circle*{2}} 
\put(60,65){\circle*{2}} 
\put(60,75){\circle*{2}} 
\put(60,85){\circle*{2}} 
\put(60,95){\circle*{2}} 
\put(60,105){\circle*{2}} 
\put(70,35){\circle*{2}} 
\put(70,45){\circle*{2}} 
\put(70,55){\circle*{2}} 
\put(70,65){\circle*{2}} 
\put(70,75){\circle*{2}} 
\put(70,85){\circle*{2}} 
\put(70,95){\circle*{2}} 
\put(70,105){\circle*{2}} 
\put(80,35){\circle*{2}} 
\put(80,45){\circle*{2}} 
\put(80,55){\circle*{2}} 
\put(80,65){\circle*{2}} 
\put(80,75){\circle*{2}} 
\put(80,85){\circle*{2}} 
\put(80,95){\circle*{2}} 
\put(80,105){\circle*{2}} 
\put(130,35){\circle*{2}} 
\put(130,45){\circle*{2}} 
\put(130,55){\circle*{2}} 
\put(130,65){\circle*{2}} 
\put(130,75){\circle*{2}} 
\put(130,85){\circle*{2}} 
\put(130,95){\circle*{2}} 
\put(130,105){\circle*{2}} 
\put(140,35){\circle*{2}} 
\put(140,45){\circle*{2}} 
\put(140,55){\circle*{2}} 
\put(140,65){\circle*{2}} 
\put(140,75){\circle*{2}} 
\put(140,85){\circle*{2}} 
\put(140,95){\circle*{2}} 
\put(140,105){\circle*{2}} 
\put(150,35){\circle*{2}} 
\put(150,45){\circle*{2}} 
\put(150,55){\circle*{2}} 
\put(150,65){\circle*{2}} 
\put(150,75){\circle*{2}} 
\put(150,85){\circle*{2}} 
\put(150,95){\circle*{2}} 
\put(150,105){\circle*{2}} 
\put(160,35){\circle*{2}} 
\put(160,45){\circle*{2}} 
\put(160,55){\circle*{2}} 
\put(160,65){\circle*{2}} 
\put(160,75){\circle*{2}} 
\put(160,85){\circle*{2}} 
\put(160,95){\circle*{2}} 
\put(160,105){\circle*{2}} 
\put(170,35){\circle*{2}} 
\put(170,45){\circle*{2}} 
\put(170,55){\circle*{2}} 
\put(170,65){\circle*{2}} 
\put(170,75){\circle*{2}} 
\put(170,85){\circle*{2}} 
\put(170,95){\circle*{2}} 
\put(170,105){\circle*{2}} 
\put(180,35){\circle*{2}} 
\put(180,45){\circle*{2}} 
\put(180,55){\circle*{2}} 
\put(180,65){\circle*{2}} 
\put(180,75){\circle*{2}} 
\put(180,85){\circle*{2}} 
\put(180,95){\circle*{2}} 
\put(180,105){\circle*{2}} 
\put(190,35){\circle*{2}} 
\put(190,45){\circle*{2}} 
\put(190,55){\circle*{2}} 
\put(190,65){\circle*{2}} 
\put(190,75){\circle*{2}} 
\put(190,85){\circle*{2}} 
\put(190,95){\circle*{2}} 
\put(190,105){\circle*{2}} 
\put(200,35){\circle*{2}} 
\put(200,45){\circle*{2}} 
\put(200,55){\circle*{2}} 
\put(200,65){\circle*{2}} 
\put(200,75){\circle*{2}} 
\put(200,85){\circle*{2}} 
\put(200,95){\circle*{2}} 
\put(200,105){\circle*{2}} 
\put(250,35){\circle*{2}} 
\put(250,45){\circle*{2}} 
\put(250,55){\circle*{2}} 
\put(250,65){\circle*{2}} 
\put(250,75){\circle*{2}} 
\put(250,85){\circle*{2}} 
\put(250,95){\circle*{2}} 
\put(250,105){\circle*{2}} 
\put(260,35){\circle*{2}} 
\put(260,45){\circle*{2}} 
\put(260,55){\circle*{2}} 
\put(260,65){\circle*{2}} 
\put(260,75){\circle*{2}} 
\put(260,85){\circle*{2}} 
\put(260,95){\circle*{2}} 
\put(260,105){\circle*{2}} 
\put(270,35){\circle*{2}} 
\put(270,45){\circle*{2}} 
\put(270,55){\circle*{2}} 
\put(270,65){\circle*{2}} 
\put(270,75){\circle*{2}} 
\put(270,85){\circle*{2}} 
\put(270,95){\circle*{2}} 
\put(270,105){\circle*{2}} 
\put(280,35){\circle*{2}} 
\put(280,45){\circle*{2}} 
\put(280,55){\circle*{2}} 
\put(280,65){\circle*{2}} 
\put(280,75){\circle*{2}} 
\put(280,85){\circle*{2}} 
\put(280,95){\circle*{2}} 
\put(280,105){\circle*{2}} 
\put(290,35){\circle*{2}} 
\put(290,45){\circle*{2}} 
\put(290,55){\circle*{2}} 
\put(290,65){\circle*{2}} 
\put(290,75){\circle*{2}} 
\put(290,85){\circle*{2}} 
\put(290,95){\circle*{2}} 
\put(290,105){\circle*{2}} 
\put(300,35){\circle*{2}} 
\put(300,45){\circle*{2}} 
\put(300,55){\circle*{2}} 
\put(300,65){\circle*{2}} 
\put(300,75){\circle*{2}} 
\put(300,85){\circle*{2}} 
\put(300,95){\circle*{2}} 
\put(300,105){\circle*{2}} 
\put(310,35){\circle*{2}} 
\put(310,45){\circle*{2}} 
\put(310,55){\circle*{2}} 
\put(310,65){\circle*{2}} 
\put(310,75){\circle*{2}} 
\put(310,85){\circle*{2}} 
\put(310,95){\circle*{2}} 
\put(310,105){\circle*{2}} 
\put(320,35){\circle*{2}} 
\put(320,45){\circle*{2}} 
\put(320,55){\circle*{2}} 
\put(320,65){\circle*{2}} 
\put(320,75){\circle*{2}} 
\put(320,85){\circle*{2}} 
\put(320,95){\circle*{2}} 
\put(320,105){\circle*{2}} 
\put(10,85){\circle{10}} 
\put(20,75){\circle{10}} 
\put(30,65){\circle{10}} 
\put(40,55){\circle{10}} 
\put(50,45){\circle{10}} 
\put(60,35){\circle{10}} 
\put(180,85){\circle{10}} 
\put(190,85){\circle{10}} 
\put(200,85){\circle{10}} 
\put(180,95){\circle{10}} 
\put(180,105){\circle{10}} 
\put(250,85){\circle{10}} 
\put(260,85){\circle{10}} 
\put(270,85){\circle{10}} 
\put(280,85){\circle{10}} 
\put(290,85){\circle{10}} 
\put(300,85){\circle{10}} 
\put(300,75){\circle{10}} 
\put(300,65){\circle{10}} 
\put(300,55){\circle{10}} 
\put(300,45){\circle{10}} 
\put(300,35){\circle{10}} 
\put(55,120){\makebox(50,15)[t]{$(a)$}} 
\put(175,120){\makebox(50,15)[t]{$(b)$}} 
\put(295,120){\makebox(50,15)[t]{$(c)$}} 
\end{picture} 
\end{center}
\vspace{-0.8cm}
\caption{Failure sets $S_k$ when failures are due to 
(a) sum of shocks, (b) minimum of shocks, and (c) maximum of shocks.}
\label{figure:1}
\end{figure}
%
\subsection{Failures due to sum of shocks}\label{section:sum}
Assume that the system fails when the sum of shocks of type 1 and of type 2 
reaches a random threshold that takes values in $\{1,2,\ldots\}$. In this case 
we have 
$$
 S_k= \{(x_1,x_2)\in \mathbb{N}^2_0: x_1+x_2=k\}, 
 \qquad k=0,1,2,\dots.
$$
and
$$ 
 \tilde S_k^{(1)}=\tilde S_k^{(2)}=S_{k-1}, 
 \qquad k=1,2,\dots.
$$
Under these assumptions, from (\ref{equation:10}) and by use of Newton's binomial 
theorem for $i=1,2$ we obtain
\begin{equation}
 f_i(t)= \lambda_i e^{-(\lambda_1+\lambda_2) t}\sum_{k=1}^{+\infty} p_k
 \frac{[(\lambda_1+\lambda_2) t]^{k-1}}{(k-1)!}.
 \label{equation:1}
\end{equation}
Similarly, making use of (\ref{equation:11}) we have 
\begin{equation}
 \barF_T(t)=  e^{-(\lambda_1+\lambda_2) t}\sum_{k=0}^{+\infty} \barP_k
 \frac{[(\lambda_1+\lambda_2) t]^k}{k!}.
 \label{equation:2}
\end{equation}
Moreover, from (\ref{equation:24}) the probability that the failure 
ultimately occurs due to a shock of type $i$ follows: 
\begin{equation}
 P(\delta=i)=\frac{\lambda_i}{\lambda_1+\lambda_2}, 
 \qquad i=1,2.
 \label{equation:3}
\end{equation}
Finally, from (\ref{equation:25}) it is not hard to prove that the conditional 
moments for $s=1,2,\ldots$ in this case are given by 
\begin{equation}
 E(T^s\,|\,\delta=i)={1\over (\lambda_1+\lambda_2)^s}\, E[(M+s-1)_s],
 \qquad i=1,2,
 \label{equation:26}
\end{equation}
where $(m)_s$ denotes the descending factorial $m(m-1)(m-2)\cdots(m-s+1)$. 
Note that the right-hand-side of (\ref{equation:26}) does not depend on 
$\delta$. This is not surprising, since the time of failure $T$ and the 
cause of failure $\delta$ are independent if the system failures are due to 
sum of shocks, as will soon be proved. It is interesting to recall that the 
relevance of independence between $T$ and $\delta$ has been pointed out by 
various authors (see, for instance, Carriere and Kochar \cite{CaKo2000}). 
\begin{proposition}\label{proposition:1}
$T$ and $\delta$ are independent under the model assumptions of 
Section \ref{section:sum}. 
\end{proposition}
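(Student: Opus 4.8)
The plan is to verify directly the factorization criterion for independence of the continuous variable $T$ and the discrete variable $\delta$. Since $\delta$ takes only the values $1$ and $2$, the variables $T$ and $\delta$ are independent precisely when the sub-densities factor as $f_i(t)=P(\delta=i)\,f_T(t)$ for every $t\geq 0$ and $i=1,2$. Indeed, integrating such an identity over $[0,t]$ yields $P\{T\leq t,\delta=i\}=P(\delta=i)\,P\{T\leq t\}$, which is the definition of independence. So the whole proof reduces to exhibiting this factorization.

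First I would read off from (\ref{equation:1}) that the $t$-dependence of $f_i(t)$ sits entirely in a factor that does not depend on $i$. Setting
$$
 g(t)=e^{-(\lambda_1+\lambda_2)t}\sum_{k=1}^{+\infty}p_k\frac{[(\lambda_1+\lambda_2)t]^{k-1}}{(k-1)!},
$$
expression (\ref{equation:1}) becomes simply $f_i(t)=\lambda_i\,g(t)$, so that the only dependence on the type of failure is through the multiplicative constant $\lambda_i$.

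Next, summing over $i$ and using (\ref{equation:13}) gives $f_T(t)=(\lambda_1+\lambda_2)\,g(t)$. Dividing, one obtains $f_i(t)/f_T(t)=\lambda_i/(\lambda_1+\lambda_2)$, a constant independent of $t$; and by (\ref{equation:3}) this constant equals exactly $P(\delta=i)$. Hence $f_i(t)=P(\delta=i)\,f_T(t)$ for all $t\geq 0$ and $i=1,2$, which is the required factorization and thus establishes independence.

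There is no genuine obstacle here: the result is an immediate consequence of the special structure produced by the sum scheme, in which $\tilde S_k^{(1)}=\tilde S_k^{(2)}=S_{k-1}$, so that the two risky sets coincide and the summation over states in (\ref{equation:10}) yields the same function of $t$ for both $i$, up to the factor $\lambda_i$. The only point requiring a moment's care is stating the correct independence criterion for a mixed continuous/discrete pair and checking that factorization of the sub-densities (rather than of a joint density) is what is needed.
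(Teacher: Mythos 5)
Your proof is correct, and it verifies the same criterion as the paper---namely that $f_i(t)=P(\delta=i)\,f_T(t)$ for all $t\geq 0$ and $i=1,2$---but by a more direct computation. The paper's own proof obtains this identity through a series manipulation: it substitutes $p_k=\barP_{k-1}-\barP_k$ into $P(\delta=i)f_T(t)$, expands each $\barP_k$ as $\sum_{j=k+1}^{+\infty}p_j$, interchanges the order of the resulting double sums, and lets the two inner partial sums of the exponential series telescope down to the single term $[(\lambda_1+\lambda_2)t]^{j-1}/(j-1)!$, thereby recovering $f_i(t)$; the intermediate expression is precisely what one gets by differentiating the survival function (\ref{equation:2}), so the computation incidentally exhibits the consistency between the $p_k$-based formula (\ref{equation:1}) and the $\barP_k$-based formula (\ref{equation:2}). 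You bypass that manipulation entirely by noting that (\ref{equation:1}) has the form $f_i(t)=\lambda_i\,g(t)$ with $g$ independent of $i$---the structural consequence, as you correctly point out, of $\tilde S_k^{(1)}=\tilde S_k^{(2)}=S_{k-1}$---whence $f_T(t)=(\lambda_1+\lambda_2)\,g(t)$ by (\ref{equation:13}) and the factorization is immediate from (\ref{equation:3}). Both arguments rest on (\ref{equation:3}), which is established before the proposition, so there is no circularity and no gap; moreover you justify (by integrating over $[0,t]$) the equivalence between factorization of sub-densities and independence of the mixed pair $(T,\delta)$, a point the paper states without argument. Your route is shorter and makes the reason for independence transparent; the paper's buys, at the price of a longer calculation, the explicit identity linking the two series representations of the failure law.
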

\begin{proof}
From (\ref{equation:13}), (\ref{equation:1}) and (\ref{equation:3}), expressing 
$p_k$ as $\barP_{k-1}-\barP_k$ there holds:
\begin{eqnarray*}
P(\delta=i)f_T(t) \!\!\!\! 
 &=& \!\!\!\! 
\lambda_i e^{-(\lambda_1+\lambda_2)t}
\bigg\{ \sum_{k=0}^{+\infty} \sum_{j=k+1}^{+\infty} p_j
\frac{[(\lambda_1+\lambda_2) t]^k}{k!} - \sum_{k=1}^{+\infty}
\sum_{j=k+1}^{+\infty} p_j
\frac{[(\lambda_1+\lambda_2) t]^{k-1}}{(k-1)!}\bigg\} \\
 &=& \!\!\!\!
\lambda_i e^{-(\lambda_1+\lambda_2)t}
\bigg\{ \sum_{j=1}^{+\infty}  p_j \sum_{k=0}^{j-1}
\frac{[(\lambda_1+\lambda_2) t]^k}{k!} - \sum_{j=2}^{+\infty}
 p_j \sum_{k=1}^{j-1}
\frac{[(\lambda_1+\lambda_2) t]^{k-1}}{(k-1)!}\bigg\} \\
 &=& \!\!\!\!
\lambda_i e^{-(\lambda_1+\lambda_2)t}
 \sum_{j=1}^{+\infty}  p_j 
\frac{[(\lambda_1+\lambda_2) t]^{j-1}}{(j-1)!}=f_i(t), 
 \qquad i=1,2.
\end{eqnarray*}
The proof is thus completed, for $T$ and $\delta$ are independent 
if and only if $f_i(t)=P(\delta=i)f_T(t)$ for all $t\geq 0$ and $i=1,2$. 
\end{proof}
\par
For the model in which failures are due to sum of shocks, the subdensity 
$f_i(t)$ and the survival function $\barF_T(t)$ are shown in Table \ref{table:1} 
when the distribution of $M$ is (i) geometric, (ii) Poisson over 
$\{1,2,\ldots\}$, (iii) $p_k=\frac{1}{k(k+1)}$, $k=1,2,\ldots$, and  
(iv) a suitable negative binomial. $I_n(x)$ denotes the modified Bessel 
function of the first kind. 
\par
We notice that $f_i(t)$ and $\barF_T(t)$ can be expressed similarly to 
Eqs.\ (\ref{equation:1}) and (\ref{equation:2}) if $N_1(t)$ and $N_2(t)$ are 
independent non-homogeneous Poisson processes. However, in this case $P(\delta=i)$ 
would be equal to the right-hand-side of (\ref{equation:3}) only if the ratio 
of the time-varying intensity functions of $N_1(t)$ and $N_2(t)$ is a constant. 
Hence, Proposition \ref{proposition:1} still holds if $N_i(t)$ is a Poisson 
process characterized by an intensity function of the form $\lambda_i\,u(t)$, 
with $i=1,2$ and $t\geq 0$.  
\begin{table}[t]  
\footnotesize
\begin{center}
\begin{tabular}{llll}
\hline\spazio
 {} & $p_k; k\geq 1$ & $f_i(t)$ & $\barF_T(t)$ \\
\hline
\spazio
$\!\!\!\!$(i) 
&
 $p(1-p)^{k-1}$ 
&
 $p \lambda_i e^{-p\tau}$
&
 $e^{-p\tau}$
\\
\hline\spazio
$\!\!\!\!$(ii) 
&
 $\ds\frac{\eta^{k-1}e^{-\eta}}{(k-1)!}$ 
&
 $\lambda_i e^{-\eta-\tau} 
 I_0\big(2\sqrt{\eta\tau}\big)$
&
 $e^{-\eta-\tau}\ds\sum_{n=0}^{+\infty}
 \left({\eta\over \tau}\right)^{n/2} 
 I_{n}\big(2\sqrt{\eta\tau}\big)$
\\
\hline\spazio
$\!\!\!\!$(iii) 
&
 $\ds\frac{1}{k(k+1)}$ 
&
 $\ds\frac{\lambda_i}{\tau^2}\big\{1-e^{-\tau} (1+\tau)\big\}$
&
 $\ds\frac{1-e^{-\tau}}{\tau}$
\\
\hline\spazio 
$\!\!\!\!$(iv) 
&
 $kp^2(1-p)^{k-1}$ 
&
 $\lambda_i p^2 \big\{1+(1-p)\tau\big\}e^{-p\tau}$
&
 $e^{-p\tau}\big\{1+p(1-p)\tau\big\}$
\\
\hline\spazio
\end{tabular}
\end{center}
\vspace{-0.8cm}
\caption{Subdensities and survival function for the model with failures due to sum 
of shocks, with $\tau=(\lambda_1+\lambda_2)t$.}
\label{table:1}
\end{table} 
 
\par
The model considered in this paper can be easily extended to the case of an 
arbitrary number $h$ of types of shocks, by introducing a multidimensional 
counting process $(N_1(t),\ldots,N_h(t))$, where $N_i(t)$ describes the number 
of shocks of $i$-th type occurred in $[0,t]$, $i=1,2,\ldots,h$. In this case, 
under the assumption that failures are due to sum of shocks, the survival 
function of $T$ can be expressed as 
\begin{equation}
 \barF_T(t)=\sum_{k=0}^{+\infty} p_k\,P\{N_1(t)+\ldots+N_h(t)<k\}, 
 \qquad t\geq 0.
 \label{equation:22}
\end{equation}
The form of the right-hand-side of (\ref{equation:22}) suggests that a central 
limit theorem might be implemented. For instance, assuming that the components 
of $(N_1(t),\ldots,N_h(t))$ are independent and identically distributed with 
finite mean and variance, Eq.\ (\ref{equation:22}) at a fixed time $t_0>0$
for large $h$ gives
$$
  \barF_T(t_0)\approx\sum_{k=0}^{+\infty} p_k\,
 \Phi\left({k-h\,\mu_0\over \sqrt{h}\,\sigma_0}\right),
$$ 
where $\mu_0=E[N_i(t_0)]$ and $\sigma_0^2=Var[N_i(t_0)]$, with $\Phi(\cdot)$ 
denoting the standard normal distribution function. 
\subsection{Failures due to minimum of shocks}
In this Section we consider another failure scheme for the shock model 
characterized by two kinds of shocks. Namely, in this case the system fails 
when the minimum of the shocks of type 1 and of type 2 reaches a random 
threshold that takes values in $\{1,2,\ldots\}$. Such a model may for 
instance be appropriate to describe the failure of systems made out of 
units connected in parallel. 
\par
In this model the failure sets and the risky sets are respectively given by 
\begin{equation}
 S_k=\{(x_1, x_2)\in \mathbb{N}^2_0: \min (x_1,x_2)=k\}, 
 \qquad k=1,2,\dots,
 \label{equation:14}
\end{equation}
and   
$$
 \tilde S_k^{(1)}=\{(k-1, x_2)\in \mathbb{N}^2_0: 
 x_2=k, k+1, \dots\},
$$
$$
 \tilde S_k^{(2)}=\{(x_1, k-1)\in \mathbb{N}^2_0: 
 x_1=k, k+1, \dots\}.
$$
\par
From (\ref{equation:10}) and (\ref{equation:14}) the following subdensities 
now follow: 
\begin{equation}
 \begin{array}{l}
 f_1(t)=\lambda_1e^{-(\lambda_1+\lambda_2)t}
 \ds\sum_{k=1}^{+\infty} p_k \frac{(\lambda_1 t)^{k-1}}{(k-1)!}\,
 \barE_k(\lambda_2 t), \qquad t\geq 0, \\
 f_2(t)=\lambda_2 e^{-(\lambda_1+\lambda_2)t}
 \ds\sum_{k=1}^{+\infty} p_k \frac{(\lambda_2 t)^{k-1}}{(k-1)!}\,
 \barE_k(\lambda_1 t), \qquad t\geq 0, 
 \end{array} 
 \label{equation:16}
\end{equation}
where for $k=1,2,\ldots$ we have set 
$$
 \barE_k(x)=\sum_{j=k}^{+\infty}{x^j\over j!}, 
 \qquad  x\in{\mathbb R}.
$$
The survival function of $T$ can now be easily obtained from (\ref{equation:11}) 
and (\ref{equation:14}): 
\begin{equation}
\barF_T(t)= e^{-(\lambda_1+\lambda_2) t} \sum_{k=0}^{+\infty} \barP_k
\bigg\{\frac{(\lambda_2 t)^k}{k!}\,\barE_k(\lambda_1 t)
+ \frac{(\lambda_1 t)^k}{k!}\,\barE_{k+1}(\lambda_2 t)\bigg\},  
\qquad t\geq 0.
 \label{equation:19}
\end{equation}
\par
Tables \ref{table:2} and \ref{table:2bis} show respectively $f_i(t)$ and $\barF_T(t)$ 
for the present model, for three choices of the distribution of $M$. Note that the 
function 
$$
 M_{n,k}(x)=\sum_{r=0}^{+\infty}{x^{3r+n+k}\over r!(r+n)!(r+k)!}, 
$$
appearing in case (ii), is a modified two-index Bessel function (also known as a 
modified Humbert function, see for instance Dattoli {\em et al.}\ \cite{DaLoMaToVoCh94}). 
It appears also in the probability distributions of certain two-dimensional 
random motions (see \cite{CeDiCr01} and \cite{DiCr2002}). The survival function 
in case (ii) of Table \ref{table:2bis} has been obtained by recalling the 
relation (see, for instance, Eq.\ (5.3) of \cite{CeDiCr01}) 
$$
 \sum_{n=0}^{+\infty}\sum_{m=0}^{+\infty}\alpha^n\beta^m\,M_{n,m}(x)
 =\exp\left\{x\left(\alpha+\beta+{1\over \alpha\beta}\right)\right\}. 
$$
\begin{table}[t]  
\footnotesize
\begin{center}
\begin{tabular}{ll}
\hline\spazio
{} & $f_i(t)$ \\
\hline
\spazio
$\!\!\!\!$(i) 
& 
 $p\lambda_i e^{-\tau}
 \ds\sum_{n=0}^{+\infty} \left[{\lambda_{3-i}\over (1-p)\lambda_i}\right]^{n+1 \over 2} 
 I_{n+1}(\alpha t)$
\\
\hline\spazio
$\!\!\!\!$(ii) 
& 
 $\lambda_i e^{-\eta-\tau}
 \ds\sum_{n=0}^{+\infty} \left({\beta^2\over \alpha\lambda_i t}\right)^{n+1} 
 M_{0,n+1}(\beta)$
\\
\hline\spazio
$\!\!\!\!$(iii) 
&
 $\ds{e^{-\tau}\over \lambda_i t^2}\left\{\sqrt{\lambda_i\over \lambda_{3-i}}\, 
 I_1\left(2t\sqrt{\lambda_1\lambda_2}\right)-\lambda_i t
 +\ds\sum_{r=0}^{+\infty}\left[\sqrt{\lambda_{3-i}\over \lambda_i}
 I_r\left(2t\sqrt{\lambda_1\lambda_2}\right)
 -\ds{(\lambda_{3-i} t)^r\over r!}-{\lambda_i t(\lambda_{3-i} t)^{r+1}\over (r+1)!}\right]\right\} $
\\
\hline\spazio 
\end{tabular}
\end{center}
\vspace{-0.8cm}
\caption{Subdensities for the model with failures due to minimum of shocks, 
for cases (i)-(iii) of Table 1, with $\tau=(\lambda_1+\lambda_2)t$, 
$\alpha=2\sqrt{(1-p)\lambda_1\lambda_2}$ and $\beta=(\eta \lambda_1\lambda_2 t^2)^{1/3}$.}
\label{table:2}
\end{table} 

\begin{table}[t]  
\footnotesize
\begin{center}
\begin{tabular}{lll}
\hline\spazio
{} & $\barF_T(t)$ \\
\hline
\spazio
$\!\!\!\!$(i) 
& 
 $e^{-\tau} \ds\sum_{n=0}^{+\infty} 
 \bigg\{\left[{\lambda_{1}\over (1-p)\lambda_2}\right]^{n\over 2} 
 I_{n}(\alpha t)+\left[\ds{\lambda_{2}\over (1-p)\lambda_1}\right]^{n+1\over 2} 
 I_{n+1}(\alpha t)\bigg\}$
\\
\hline\spazio
$\!\!\!\!$(ii) 
& 
 $e^{-\eta-\tau}\left(\exp\left\{\beta\left[\left(\ds{\eta^2\over \lambda_1\lambda_2 t^2}\right)^{\!\! {1\over 3}}
 +\left(\ds{\lambda_1^2 t^2\over \eta}\right)^{\!\! {1\over 3}}
 + \left(\ds{\lambda_2 \over \lambda_1\eta^2}\right)^{\!\! {1\over 3}}\right]\right\}
+\exp\left\{\beta\left[\left(\ds{\eta^2\over \lambda_1\lambda_2 t^2}\right)^{\!\! {1\over 3}}
 +\left(\ds{\lambda_2^2 t\over \lambda_1\eta}\right)^{\!\! {1\over 3}}+
 \right.\right.\right.\hspace{-0.5cm}$
\\
{}
&
 $\left.\left.\left. + \left(\ds{\lambda_1^2 t \over \lambda_2\eta}\right)^{\!\! {1\over 3}}\right]\right\}
 -\left(\ds{\lambda_2^2 t \over \lambda_1\eta}\right)^{\!\! {1\over 3}}
 \ds\sum_{j=0}^{+\infty}M_{j,0}(\beta)\left(\ds{\eta^2\over \lambda_1\lambda_2 t^2}\right)^{\!\! {j\over 3}}
 \right)$
\\
\hline\spazio
$\!\!\!\!$(iii) 
&
 $\ds{e^{-\tau}\over t}\left\{\ds\sum_{j=0}^{+\infty}
 I_j\left(2t\sqrt{\lambda_1\lambda_2}\right)
 {\lambda_1^{j+1}+\lambda_2^{j+1}\over (\lambda_1\lambda_2)^{{j\over 2}+1}}
 +I_1\left(2t\sqrt{\lambda_1\lambda_2}\right){1\over \sqrt{\lambda_1\lambda_2}}
 -{e^{\lambda_1 t}\over \lambda_2}-{e^{\lambda_2 t}\over \lambda_1}\right\}$
& 
{}\\
\hline\spazio 
\end{tabular}
\end{center}
\vspace{-0.8cm}
\caption{Survival function for the model with failures due to minimum 
of shocks, for the same cases of Table 2.}
\label{table:2bis}
\end{table} 
%
\subsection{Failures due to maximum of shocks}
Let us now consider a failure scheme in which the system fails when the 
maximum of the shocks of type 1 and of type 2 reaches a random threshold   
taking values in $\{1,2,\ldots\}$. For instance, this model is suitable to 
describe the failure of systems composed by units serially interconnected. 
\par
In this case we assume that the failure sets are given by 
\begin{equation}
 S_k=\{(x_1, x_2)\in \mathbb{N}^2_0: 
 \max (x_1,x_2)=k\}, \qquad  k=1,2,\dots,
 \label{equation:15}
\end{equation}
so that, for $k=1,2,\dots$, the risky sets are 
$$
 \tilde S_k^{(1)}=\{(k-1, x_2)\in \mathbb{N}^2_0: 
 x_2=0, 1,\ldots k-1\},
$$
$$ 
 \tilde S_k^{(2)}=\{(x_1, k-1)\in \mathbb{N}^2_0: 
 x_1=0, 1, \dots k-1\}.
$$
Note that, even if definition (\ref{equation:15}) includes state $(k,k)$ in $S_k$, 
the assumption that failures are due to a single cause excludes in this model 
the possibility that states of the type $(k,k)$ are failure states.  
\par
From (\ref{equation:10}) it follows 
\begin{equation}
 \begin{array}{l}
 f_1(t)=\lambda_1 e^{-(\lambda_1+\lambda_2)t}
 \ds\sum_{k=1}^{+\infty} p_k \frac{(\lambda_1 t)^{k-1}}{(k-1)!}E_{k}(\lambda_2 t), 
 \qquad t\geq 0, \\
 f_2(t)=\lambda_2 e^{-(\lambda_1+\lambda_2)t}
 \ds\sum_{k=1}^{+\infty} p_k \frac{(\lambda_2 t)^{k-1}}{(k-1)!}E_{k}(\lambda_1 t), 
 \qquad t\geq 0,
 \end{array}
 \label{equation:17}
\end{equation}
where $E_k(x)$, $x\in{\mathbb R}$, is the auxiliary function such that $E_0(x)=0$ and 
\begin{equation}
 E_k(x)=\sum_{j=0}^{k-1}{x^j\over j!} 
 =e^{x}-\barE_k(x), 
 \qquad 
 k=1,2,\ldots.
 \label{equation:18}
\end{equation}
It is not hard to see that the subdensities $f_i(t)$ for the two models in which 
failures are due to minimum and maximum of shocks are closely related. This is 
due to the similar nature of failure sets $S_k$ defined in (\ref{equation:14}) 
and (\ref{equation:15}). Indeed, making use of identity (\ref{equation:18}), the 
sum of subdensities (\ref{equation:16}) and (\ref{equation:17}) is seen to be 
$$
 \lambda_i e^{-\lambda_i t} \ds\sum_{k=1}^{+\infty} p_k \frac{(\lambda_i t)^{k-1}}{(k-1)!},  
 \qquad t\geq 0,\quad i=1,2.
$$
Moreover, from (\ref{equation:11}) and (\ref{equation:15})  
the survival function of $T$ is obtained: 
\begin{equation}
\barF_T(t)= e^{-(\lambda_1+\lambda_2) t} \sum_{k=0}^{+\infty} \barP_k
\bigg\{\frac{(\lambda_2 t)^k}{k!}\,E_{k}(\lambda_1 t)
+ \frac{(\lambda_1 t)^k}{k!}\,E_{k+1}(\lambda_2 t)\bigg\},  
\qquad t\geq 0.
 \label{equation:20}
\end{equation}
As for subdensities $f_i(t)$, the survival functions of $T$ for the last two 
considered models are related. Indeed, making again use of (\ref{equation:18}) 
the sum of functions (\ref{equation:19}) and (\ref{equation:20}) is
$$
 \sum_{i=1}^2 e^{-\lambda_i t}\ds\sum_{k=0}^{+\infty}\barP_k\frac{(\lambda_i t)^k}{k!},  
 \qquad t\geq 0.
$$
\section{Concluding remarks}
We have introduced a formulation of shock models driven by bivariate Poisson 
process that includes a competing risks set-up, in which failures are due to 
one out of two mutually exclusive causes. We have obtained specific expressions 
for the failure densities and the survival function under three different failure 
schemes, in which the variable that causes the failure is (a) the sum of shocks, 
(b) the minimum of shocks, (c) the maximum of shocks. 
\par
Future developments of this model will be oriented to the construction of 
other structures for failure sets $S_k$, and to the study of a case in which the 
two fonts of shocks are not mutually exclusive, thus including the possibility 
that failures are due to both types of shocks. The underlying counting process 
will not have independent components; for instance the following bivariate 
Poisson process could be employed (see Marshall and Olkin \cite{MaOl67} and 
references therein):
$$
 P\{N_1(t)=x_1,N_2(t)=x_2\}
 =e^{-(\lambda_1 + \lambda_2 + \lambda_3) t}\sum_{k=0}^{\min(x_1,x_2)}
 \frac{\lambda_1^{x_1-k} \,\lambda_2^{x_2-k}\, \lambda_3^{k}\, t^{x_1+x_2-k}}
 {(x_1-k)!\,(x_2-k)!\,k!}, 
 \qquad t\geq 0.
$$
Furthermore, the structure of the failure sets and of the risky sets will be 
modified accordingly, by taking also into account the necessity that a third 
hazard rate must be added to those defined in Eqs.\ (\ref{equation:7}). 
\subsection*{\bf Acknowledgments}
%
The authors thank Petr L\'ansk\'y, Franco Pellerey, Luigi M.\ Ricciardi and 
Hiromi Seno for useful comments and fruitful discussions. This work has been 
performed under partial support by MIUR (PRIN 2005), G.N.C.S.-INdAM and 
Regione Campania.
%

%
{\it Antonio Di Crescenzo}\\ 
{\sc Dipartimento di Matematica e Informatica, Universit\`a di Sa\-ler\-no} \\ 
Via Ponte don Melillo, 84084 Fisciano (SA), Italy\\
E-mail: {\tt adicrescenzo@unisa.it}\\
\newline
{\it Maria Longobardi} \\ 
{\sc Dipartimento di Matematica e Applicazioni, Universit\`a di Napoli Federico II} \\ 
Via Cintia, 80126 Napoli, Italy\\
E-mail: {\tt maria.longobardi@unina.it}
%
 
\end{document}